\newtheorem{teo}{Theorem}%[section]
\newtheorem{lem}[teo]{Lemma}
\newtheorem{prop}[teo]{Proposition}
\newtheorem{cor}[teo]{Corollary}
\newtheorem{dfn}[teo]{Definition}
\newtheorem{ex}[teo]{Example}
\newcommand{\id}{\operatorname{id}}
\newcommand{\Ker}{\operatorname{Ker}}
\def\<{\langle}
\def\>{\rangle}
\def\f{{\varphi}}
\def\Ker{\mathop{\rm Ker}\nolimits}
\def\Im{\mathop{\rm Im}\nolimits}
\begin{document}

\title[Hilbert $C^*$-modules with Hilbert dual]{Hilbert $C^*$-modules with Hilbert dual and $C^*$-Fredholm operators}
%\author{Michael Frank}
\author{Vladimir Manuilov}
\author{Evgenij Troitsky}
\thanks{ The work is supported by the Russian Science Foundation under grant 21-11-00080}
\address{Moscow Center for Fundamental and Applied Mathematics, 
	Lomonosov Moscow State University, 119991 Moscow, Russia}
\email{manuilov@mech.math.msu.su}
\email{troitsky@mech.math.msu.su}
\keywords{Hilbert $C^*$-module,  
monotone complete $C^*$-algebra, 
$A$-Fredholm operator, 
dual module,
self-dual module, 
orthogonal complement, 
polar decomposition}
\subjclass[2000]{46L08;  
58B34}

\begin{abstract}
We study Hilbert $C^*$-modules over a $C^*$-algebra $A$ for which the Banach $A$-dual module
carries a natural structure of Hilbert $A$-module. In this direction we prove that 
if $A$ is monotone complete, $M$ and $N$ are Hilbert $A$-modules,  $M$ is self-dual, and both 
$T:M\to N$ and its Banach $A$-dual $T':N'\to M'$ have trivial kernels and cokernels then $M\cong N'$.
With the help of this result, for a monotone complete $C^*$-algebra $A$, we prove that the index of any $A$-Fredholm operator
can be calculated as the difference of its kernel and cokernel as in the Hilbert space case.
\end{abstract}

\maketitle

%%%%%%%%%%%%%%%%%%%%%%%%%%%%%%%%%%%%
\section{Introduction}

Hilbert $C^*$-modules were introduced by W. Paschke in \cite{Pas1} as a generalization of Hilbert spaces to modules over a $C^*$-algebra $A$ with an inner product taking values also in $A$. 

In this paper we deal with right Hilbert $C^*$-modules, i.e. $A$ acts on a module $M$ from the right, and the inner product satisfies
$\<m,n a\>=\<m,n\>a$, $\<ma,n\>=a^*\<m,n\>$, $m,n \in M$, $a \in A$.
The dual module $M'$ consists of all bounded anti-$A$-linear maps $f:M\to A$ (called functionals),
i.e., maps satisfying $f(ma)=a^*f(m)$. 
The dual module is a Banach $A$-module with the multiplication defined by $(fa)(m)=f(m)a$, but not necessarily a Hilbert $C^*$-module, and the map $m\mapsto\widehat m=\langle \cdot,m\rangle$ is an isometric inclusion $M\subset M'$.  
A module $M$ is \emph{self-dual} if this inclusion is an isomorphism. Self-dual modules have a lot of nice properties, e.g. \cite{Pas1,Frank1990} (and also \cite{MTBook}), but they are rare. Unlike the first dual module, the second dual one, $M''$, has a natural structure of a Hilbert $C^*$-module, and there is an isometric inclusion $M\subset M''\subset M'$. This is the end of the story, as $M'''$ is canonically isomorphic to $M'$ \cite[Theorem ~2.4]{Pas2} (see Theorem \ref{teo:pasc} below). 
A module $M$ is \emph{reflexive} if $M=M''$. Reflexive modules are not as nice as self-dual ones, but still share some nice properties of the latter, see e.g. \cite{Frank1990,FMT2010NY,FMT2010Studia}
(and also \cite{MTBook}). The last remaining equality, $M'=M''$, was discussed in \cite{Manuilov2022}, where such modules were called \emph{modules with Hilbert dual}.

Hilbert $C^*$-modules over $W^*$-algebras have some special properties that make them closer to Hilbert spaces. In particular, although they need not be self-dual, the inner product extends to the dual modules, making them Hilbert $W^*$-modules as well. The same is true for monotone complete $C^*$-algebras \cite{FrankMaNa}, so any Hilbert $C^*$-module over such $C^*$-algebra has Hilbert dual. 

\medskip

It is well known that if there is a bounded operator $T:H_1\to H_2$ between two Hilbert spaces with trivial kernel and cokernel then $H_1$ and $H_2$ are isomorphic. Although Hilbert spaces differ only by cardinality, the construction of an isomorphism uses a polar decomposition for $T$, which need not to be invertible. 

When we replace Hilbert spaces by Hilbert $C^*$-modules, the result becomes more difficult, and definitely not true in full generality. Our aim is to clarify the conditions that allow to prove that two Hilbert $C^*$-modules, $M$ and $N$, over a $C^*$-algebra $A$ are isomorphic when connected by a bounded operator $T:M\to N$. Our main result claims that if $A$ is monotone complete, $M$ self-dual, and both $T$ and its dual $T':N'\to M'$ have trivial kernels and cokernels then $M\cong N'$ (Theorem \ref{teo:corrected_ext}). As an application, we generalize a result on decomposition of $A$-Fredholm operators from \cite{FrTroFA} to the case when $A$ is monotone complete 
(Theorem \ref{teo:Fredh}) and correct the proof in the $W^*$ case. 

\medskip\noindent
\textbf{Acknowledgment:} The authors are indebted to Michael Frank and Stefan Ivkovic for valuable 
discussions and to an anonymous reviewer for careful reading of the first version of the paper 
and very helpful and friendly criticism.

\section{Preliminaries}
Definitions of Hilbert $C^*$-modules, adjointable and non-adjointable operators and their basic properties can be found in textbooks, e.g. \cite{Pas1,MTBook,Lance,ManuilovTroit2000JMS}.

Recall that the canonical inclusions for dual and double dual modules \cite{Pas2}.
For $m\in M$, define $\dot{m}\in M''$ by 
\begin{equation}\label{11111}
\dot{m}(f)=(f(m))^*, \quad f\in M'.
\end{equation}
The map $m\mapsto \dot{m}$ is an isometric module map of $M$ to $M''$. For $X\in M''$ define $\widetilde{X}\in M'$ by $\widetilde{X}(m)=X(\widehat{m})$, where $m\mapsto \widehat{m}$, $M\to M'$ was defined above. So, if we identify $M$ with its image under $m\mapsto \widehat{m}$, then $\widetilde{X}$ is the restriction of $X$ on $M$. Notice that 
\begin{equation}\label{eq:Pas_identif0}
(\dot{m})\!{\widetilde{\phantom{m}}}=\widehat{m} \quad\mbox{ for all } m\in M.
\end{equation}
It is clear that the map $X\mapsto \widetilde{X}$ 
is a module map of $M''$ into $M'$ and that $\|\widetilde{X}\|_{M'}\le \|X\|_{M''}$ for $X\in M''$. Moreover, this map is an isometry as it follows from the following proposition.
\begin{prop}[Prop. 2.1 in \cite{Pas2}]
Let $N$ be a submodule of $M'$ containing $\widehat{M}$. For any $f\in N'$, we have $\|f\|_{N'}=\|f|_{\widehat{M}}\|$, where
\begin{equation}\label{eq:norm_dual}
\|f\|_{N'}=\sup_{m\in N,\|m\|\leq 1}\|f(m)\|.
\end{equation}
\end{prop}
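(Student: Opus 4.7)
The inequality $\|f|_{\widehat M}\|\le \|f\|_{N'}$ is immediate from \eqref{eq:norm_dual}: since $\widehat M\subset N$ and the map $m\mapsto\widehat m$ is an isometry, the supremum defining $\|f|_{\widehat M}\|$ is taken over a subset of the unit ball of $N$. The entire content of the proposition lies in the reverse inequality, and that is where I would concentrate the effort.

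Fix $g\in N$ with $\|g\|_N\le 1$ and an arbitrary $\e>0$; it suffices to exhibit a bound of the form $\|f(g)\|\le\|f|_{\widehat M}\|+O(\e)$. The starting observation is that $g$, regarded as an element of $M'$, is itself a bounded anti-$A$-linear map $g:M\to A$ of norm $\|g\|_{M'}=\|g\|_N\le 1$. Using the defining formula for the norm on $M'$, pick $m\in M$ with $\|m\|\le 1$ and $\|g(m)\|\ge \|g\|_{M'}-\e$. Introduce the functional $h\in M'$ defined by $h(m'):=f(\widehat{m'})$; this is anti-$A$-linear because $\widehat{\cdot}$ is $A$-linear and $f$ is anti-$A$-linear, and $\|h\|_{M'}=\|f|_{\widehat M}\|$.

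The central mechanism is the right $A$-action, which preserves both $N$ and $\widehat M$ (via $\widehat{m a}=\widehat m\cdot a$). I would twist $g$ and $\widehat m$ on the right by the element $g(m)^*\in A$ and use anti-linearity to compute
\[
f(g\cdot g(m)^*)=g(m)\,f(g),\qquad f(\widehat m\cdot g(m)^*)=g(m)\,h(m).
\]
The strategy is then to show that $g\cdot g(m)^*$ and $\widehat m\cdot g(m)^*$ are close in $N$, so that applying $f$ relates $g(m)f(g)$ to $g(m)h(m)$ up to a small error; the relevant tool here is the Hilbert-module Cauchy--Schwarz inequality $g(m')^*g(m')\le \|g\|_{M'}^2\langle m',m'\rangle$, which, combined with the fact that $m$ nearly saturates $\|g\|_{M'}$, forces the discrepancy functional $m'\mapsto (g(m')-\widehat m(m'))g(m)^*$ to have small $M'$-norm.

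Once this comparison is in hand, one obtains $\|g(m)\|\cdot\|f(g)\|\le \|g(m)\|\cdot\|h(m)\|+O(\e)\le \|g(m)\|\cdot\|h\|+O(\e)$. Provided $\|g\|_{M'}>0$ (the degenerate case $g=0$ being trivial), dividing through by $\|g(m)\|$ and letting $\e\to 0$ yields $\|f(g)\|\le\|h\|=\|f|_{\widehat M}\|$, which is exactly what is required. The main obstacle is the Cauchy--Schwarz-based norm estimate on the discrepancy $g\cdot g(m)^*-\widehat m\cdot g(m)^*$: in the absence of an $A$-valued inner product on $N$ one cannot invoke Hilbert-module orthogonality directly, and the estimate must be teased out from the near-equality in Cauchy--Schwarz for $g$ at the nearly optimal vector $m$.
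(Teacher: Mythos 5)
First, a remark on the comparison itself: the paper does not prove this proposition --- it is quoted from Paschke \cite{Pas2} and used as a black box --- so your attempt has to be judged on its own merits. The easy inequality $\|f|_{\widehat M}\|\le\|f\|_{N'}$ is fine, but the two steps carrying the reverse inequality both fail. The central claim, that near-saturation of $\|g\|_{M'}$ at $m$ forces the discrepancy $(g-\widehat m)\,g(m)^*$ to have small $M'$-norm, is false, and the paper's own example $M=C_0(X)$ over $A=C_b(X)$, $X=(0,1)$, refutes it: let $g\in M'=C_b(X)$ be the functional induced by the multiplier $1$, i.e. $g(y)=\bar y$, so $\|g\|_{M'}=1$. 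For any real $m\in C_0(X)$ with $0\le m\le 1$ and $\|m\|_\infty=1$ (so that $\|g(m)\|=\|g\|_{M'}$ \emph{exactly}, not just up to $\e$), one computes $\bigl((g-\widehat m)g(m)^*\bigr)(y)=\bar y\,(1-m)m$, hence $\|(g-\widehat m)g(m)^*\|_{M'}=\|(1-m)m\|_\infty\ge 1/4$, since the continuous function $m$ must pass through the value $1/2$. No choice of $\e$ helps. The underlying reason is that $\widehat M$ need not be norm-dense in $M'$, and ``near-equality in Cauchy--Schwarz implies nearly parallel'' is a genuinely Hilbertian phenomenon: it needs an inner product on an ambient space containing both $g$ and $\widehat m$, which is precisely what $N\subseteq M'$ lacks. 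You flag this as ``the main obstacle''; it is not an obstacle to be teased out, because the estimate you need is simply not true.

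Second, even granting that estimate, the concluding step is invalid. From $f(g\cdot g(m)^*)=g(m)f(g)$ you pass to ``$\|g(m)\|\cdot\|f(g)\|\le\|g(m)\|\cdot\|h(m)\|+O(\e)$'' and then ``divide through by $\|g(m)\|$''. In a $C^*$-algebra $\|ab\|$ can be far smaller than $\|a\|\,\|b\|$ (take $a=(1,0)$, $b=(0,1)$ in $\C\oplus\C$, so $ab=0$), so $\|g(m)f(g)\|$ gives no lower bound on $\|f(g)\|$ whatsoever: controlling $g(m)f(g)$ says nothing about $f(g)$. (There is also a sign slip: with the paper's conventions the Cauchy--Schwarz inequality for functionals reads $g(x)g(x)^*\le\|g\|_{M'}^2\langle x,x\rangle$, not $g(x)^*g(x)\le\|g\|_{M'}^2\langle x,x\rangle$.) A correct argument cannot proceed by a single global norm-approximation of $g$ by an element of $\widehat M$; Paschke's proof in \cite{Pas2} instead exploits the structure developed in Section~3 of \cite{Pas1}, working with the inequality $g(x)g(x)^*\le\|g\|^2\langle x,x\rangle$ state by state, and you should consult it rather than repair this route.
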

After making all
permissible identifications, we can then array the modules $M$, $M'$, and $M''$ as
\begin{equation}\label{eq:Pas_identif}
M\overset{D}{\subseteq} M''\overset{I} {\subseteq} M', \qquad D(m)=\dot{m}, \quad I(X)=\widetilde{X}, \qquad I\circ D (m) = \widehat{m}
\end{equation}
by (\ref{eq:Pas_identif0}).

In  \cite{Pas2} the inner product 
\begin{equation}\label{eq:pas_prod}
\<\cdot,\cdot\>'': M'' \times M'' \to A, \qquad \<X,Y\>''=Y(I(X)),
\end{equation}
is introduced and the following statement is proved.
\begin{teo}[Theorem 2.4 in \cite{Pas2}]\label{teo:pasc}
The map \emph{(\ref{eq:pas_prod})} is an $A$-valued inner product on $M''$. The norm obtained from this product coincides with the initial (operator) norm on $M''$. The map $I$ from \emph{(\ref{eq:Pas_identif})} is an isometry of $M''$ into $M'$. 
\end{teo}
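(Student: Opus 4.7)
The isometry claim for $I$ is essentially immediate from the preceding proposition: taking $N=M'$ (which trivially contains $\widehat M$), every $X\in M''=(M')'$ satisfies $\|X\|_{M''}=\|X|_{\widehat M}\|$, and under the identification $m\leftrightarrow\widehat m$ the restriction $X|_{\widehat M}$ is exactly $\widetilde X=I(X)\in M'$; hence $\|I(X)\|_{M'}=\|X\|_{M''}$.

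The algebraic properties of $\langle\cdot,\cdot\rangle''$ are routine. Additivity is obvious; for the module actions, $(Xa)(f)=X(f)a$ gives $\widetilde{Xa}=\widetilde X\cdot a$ in $M'$, so
\[
\langle Xa,Y\rangle''=Y(\widetilde X\cdot a)=a^*\,Y(\widetilde X)=a^*\langle X,Y\rangle'',
\]
and similarly $\langle X,Yb\rangle''=\langle X,Y\rangle''\,b$. Hence the form is $A$-sesquilinear with the conventions used in the paper.

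The two substantive properties are conjugate symmetry and positivity. Both are transparent on the subset $\dot M\subset M''$: using $\widetilde{\dot m}=\widehat m$, one finds
\[
\langle \dot m,Y\rangle''=Y(\widehat m)=\widetilde Y(m),\qquad \langle Y,\dot m\rangle''=\dot m(\widetilde Y)=\widetilde Y(m)^*,
\]
and in particular $\langle \dot m,\dot m\rangle''=\langle m,m\rangle\geq 0$. To extend these identities to all of $M''$ I would establish a Goldstine-type density: $\dot M$ is dense in $M''$ in the topology of pointwise weak convergence on $M'$. The defect $Y(\widetilde X)-X(\widetilde Y)^*$ is then separately continuous in this topology and vanishes on the dense set $\dot M$ in each slot, hence on all of $M''\times M''$; positivity transfers by the same approximation together with the weak closedness of the positive cone of $A$. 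The main obstacle is precisely this density assertion which, as in the Banach bidual case, calls for a Hahn--Banach/polar argument adapted to the module setting; alternatively one can fix a faithful representation of $A$ on a Hilbert space $H$, pass to the interior tensor product $M\otimes_A H$, and invoke the classical weak-$*$ Goldstine theorem in that operator-theoretic picture.

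With symmetry and positivity in place, the module-valued Cauchy--Schwarz inequality becomes available and the norm equality follows. One direction is $\|X(\widetilde X)\|\leq \|X\|\,\|\widetilde X\|=\|X\|^2$ by submultiplicativity and the isometry of $I$. For the reverse, for any $m\in M$ with $\|m\|\leq 1$ one has $\langle \dot m,X\rangle''=\widetilde X(m)$, so Cauchy--Schwarz yields
\[
\widetilde X(m)^*\widetilde X(m)\leq \|m\|^2\,\langle X,X\rangle'',
\]
whence $\|\widetilde X(m)\|^2\leq \|\langle X,X\rangle''\|$; taking the supremum over $m$ and using $\|X\|=\|\widetilde X\|$ gives $\|X\|^2\leq \|\langle X,X\rangle''\|$, completing the proof.
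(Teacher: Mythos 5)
This statement is quoted from Paschke (\cite{Pas2}, Theorem 2.4); the paper gives no proof of its own, so your proposal can only be judged on its internal merits. The parts you do carry out are fine: the isometry of $I$ is indeed an immediate application of the preceding proposition with $N=M'$, the module--linearity computations are correct, the identities $\langle \dot m,Y\rangle''=\widetilde Y(m)$ and $\langle Y,\dot m\rangle''=\widetilde Y(m)^*$ (hence symmetry whenever one slot lies in $\dot M$, in fact exactly and with no approximation needed), and the norm equality via Cauchy--Schwarz at the end are all correct \emph{conditional on} the form being a genuine inner product.

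The gap is precisely the conditional part, and it is not a technicality. First, the Goldstine-type density of $\dot M$ in $M''$ for the topology of pointwise weak convergence on $M'$ is the entire content of the difficulty and is left unproved; it does not follow from the classical Goldstine theorem because $M'$ is the $A$-linear dual, not the Banach dual, and the polar/Hahn--Banach machinery does not transfer verbatim to $A$-valued functionals. Second, even granting density, your continuity argument does not close: in the defect $Y(\widetilde X)-X(\widetilde Y)^*$, only the term in which the varying element is evaluated at a \emph{fixed} element of $M'$ converges along the approximating net; the other term requires the fixed functional ($Y$ or $X$) to be continuous from the pointwise-weak topology on $M'$ into $(A,\mathrm{weak})$, which a merely bounded $A$-functional need not be. The same obstruction defeats the transfer of positivity: from $\langle\dot m_\alpha,\dot m_\alpha\rangle''=\langle m_\alpha,m_\alpha\rangle\ge 0$ you cannot pass to $X(\widetilde X)\ge 0$ without an iterated-limit control that is exactly what is missing. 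Paschke's actual argument avoids this by localizing at states $\varphi$ of $A$, using the Hilbert spaces $H_\varphi$ obtained from $\varphi\circ\langle\cdot,\cdot\rangle$ and the specific results of \cite{Pas1} identifying how $M'$ and $M''$ embed into $H_\varphi$; your parenthetical suggestion to pass to a representation of $A$ points in that direction, but as written it is a placeholder rather than a proof.
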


\section{Hilbert $C^*$-modules with Hilbert dual}

\begin{dfn}[\cite{Manuilov2022}]\label{def}\rm
A Hilbert $C^*$-module $M$ over a $C^*$-algebra $A$ is called a \textit{module with a Hilbert dual} if the inner product $\langle\cdot,\cdot\rangle$ on $M$ extends to an inner product $\langle\cdot,\cdot\rangle'$ on $M'$ such that 
\begin{itemize}
\item
$f(m)=\langle \widehat m,f\rangle'$ for any $m\in M$ and any $f\in M'$ (in particular, $\langle \widehat m,\widehat n\rangle'=\widehat n(m)=\langle m,n\rangle$ for any $m,n\in M$);
\item
$M'$ is complete with respect to the norm determined by the inner product $\langle\cdot,\cdot\rangle'$.
\end{itemize}
\end{dfn}

\begin{prop}
$M$ is a Hilbert $C^*$-module with a Hilbert dual if and only if $I(M'')=M'$.
\end{prop}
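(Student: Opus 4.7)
The plan is to exploit the fact, from Theorem \ref{teo:pasc}, that $I\colon M''\to M'$ is always an isometric module embedding and that $M''$ always carries the canonical $A$-valued inner product $\langle X,Y\rangle''=Y(I(X))$ making it a Hilbert $A$-module. Thus $M'$ admitting a Hilbert structure of the required kind is equivalent to being able to transport the structure of $M''$ through $I$, and this is possible precisely when $I$ is surjective.

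For the implication $I(M'')=M'\Rightarrow M$ has a Hilbert dual, I would define
\[
\langle f,g\rangle' := \langle I^{-1}(f),I^{-1}(g)\rangle''\qquad (f,g\in M').
\]
Since $I$ is an isometric $A$-module isomorphism, this is an $A$-valued inner product on $M'$ whose induced norm is the original operator norm, so $M'$ is complete. To verify the extension property, I would use (\ref{eq:Pas_identif}) which gives $I^{-1}(\widehat{m})=\dot{m}$: then for $f\in M'$ and $X:=I^{-1}(f)$,
\[
\langle\widehat m,f\rangle' = \langle\dot m,X\rangle'' = X(I(\dot m)) = X(\widehat m) = \widetilde X(m)=f(m),
\]
which, specialised to $f=\widehat n$, gives $\langle\widehat m,\widehat n\rangle'=\widehat n(m)=\langle m,n\rangle$, so $\langle\cdot,\cdot\rangle'$ genuinely extends $\langle\cdot,\cdot\rangle$.

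For the converse, assume $M$ has a Hilbert dual with extended inner product $\langle\cdot,\cdot\rangle'$ on $M'$. Given an arbitrary $g\in M'$, I would produce a preimage under $I$ by setting
\[
X_g\colon M'\to A,\qquad X_g(f):=\langle f,g\rangle'.
\]
Because $\langle\cdot,\cdot\rangle'$ is anti-$A$-linear in the first argument and bounded, $X_g$ lies in $M''$. Using the defining property $f(m)=\langle\widehat m,f\rangle'$,
\[
\widetilde{X_g}(m)=X_g(\widehat m)=\langle\widehat m,g\rangle'=g(m),
\]
so $I(X_g)=g$. Hence $I(M'')=M'$.

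The only delicate point — really the one place where the argument could go wrong — is matching the module structures under the transported inner product; I would handle this by a direct check that $I(Xa)=I(X)a$ (via $I(Xa)(m)=(Xa)(\widehat m)=X(\widehat m)a$), from which $A$-linearity of $\langle\cdot,\cdot\rangle'$ in the second variable follows automatically. Everything else is a short unwinding of the definitions.
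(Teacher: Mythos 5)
Your forward direction is correct and is essentially the paper's argument: transport the Paschke inner product $\langle\cdot,\cdot\rangle''$ through the isometry $I$, and check the extension property via $I(\dot m)=\widehat m$. That part is fine (your computation $\langle\dot m,X\rangle''=X(\widehat m)=\widetilde X(m)=f(m)$ is even slightly more direct than the paper's, which passes through the adjoint identity $(\dot m(f))^*=f(m)$).

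The converse, however, has a genuine gap at the sentence ``Because $\langle\cdot,\cdot\rangle'$ is anti-$A$-linear in the first argument and bounded, $X_g$ lies in $M''$.'' Membership in $M''$ means boundedness of $X_g$ with respect to the \emph{operator} norm $\|\cdot\|_{M'}$ of (\ref{eq:norm_dual}), whereas Cauchy--Schwarz for $\langle\cdot,\cdot\rangle'$ only gives $\|X_g(f)\|\le\|f\|'\,\|g\|'$ in terms of the norm $\|f\|'=\|\langle f,f\rangle'\|^{1/2}$ induced by the extended inner product. A priori these two norms on $M'$ need not be comparable in the direction you need: the defining property $f(m)=\langle\widehat m,f\rangle'$ readily gives $\|f\|_{M'}\le\|f\|'$, but that is the wrong inequality for your purpose. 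The missing step --- and the one place where the second bullet of Definition \ref{def} (completeness of $M'$ in $\|\cdot\|'$) is actually used --- is to invoke the open mapping theorem for the identity map $(M',\|\cdot\|')\to(M',\|\cdot\|_{M'})$ to obtain $\|f\|'\le c\,\|f\|_{M'}$, and only then conclude $\|X_g(f)\|\le c^2\|f\|_{M'}\|g\|_{M'}$, so that $X_g\in M''$. This norm-equivalence argument occupies the bulk of the paper's proof of the converse; without it your $X_g$ is only an element of the dual of $(M',\|\cdot\|')$, not visibly of $M''$. (By contrast, the point you flag as delicate, compatibility of the module structures under $I$, is routine.) Once boundedness is secured, your verification $I(X_g)=g$ is correct and coincides with the paper's.
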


\begin{proof}
If $I(M'')=M'$,  we can consider the induced inner product on  $M'$: $\<f,g\>':=\<I^{-1}(f),I^{-1}(g)\>''$, where $f,g\in M'$.
Let $X=I^{-1}(f)$. Then, by (\ref{eq:Pas_identif0}) and (\ref{eq:pas_prod}),
\begin{eqnarray*}
\<\widehat{m},f \>'&=&\<I^{-1}(\widehat{m}),X\>''=\<I^{-1} I(\dot{m}),X\>''\\\
&=&(\<X, \dot{m}\>'')^*=(\dot m(I(X)))^*=(\dot m(f))^*\\
&=&f(m).
\end{eqnarray*}

Conversely let $M$ be a Hilbert $C^*$-module with a Hilbert dual, and let $\langle\cdot,\cdot\rangle'$ be an extension of the inner product on $M$ to $M'$. Then, besides the standard (operator)  norm $\|f\|_{M'}$ on $M'$ (see (\ref{eq:norm_dual})),  there is also the norm $\|f\|'=\|\langle f,f\rangle'\|^{1/2}$, $f\in M'$. As $M'$ is complete with respect to both norms, and we have the following estimation:
by  Definition \ref{def} $\<f,\widehat{m}\>' \<\widehat{m},f\>' \le (\|\widehat{m}\|')^2 \<f,f\>'= (\|m\|)^2 \<f,f\>'$ (see e.g. \cite[Prop. 1.2.4]{MTBook}) and
\begin{eqnarray*}
(\|f\|')^2&=&\|\<f,f\>'\| \ge \Bigl(\sup_{m\in M,\: \|m\|\le 1} \|\<f,m\>'\|\Bigr)^2\\
&=& \Bigl(\sup_{m\in M,\: \|m\|\le 1} \|f(m)\|\Bigr)^2=\left(\|f\|_{M'}\right)^2.
\end{eqnarray*}
Thus $\mathrm{Id}: (M',\|.\|') \to (M',\|.\|_{M'})$ is an injective bounded epimorphism and the norms
are equivalent, i.e. there exists $c>0$ such that $\|f\|_{M'}\leq \|f\|'\leq c\|f\|_{M'}$ for any $f\in M'$. Given $f\in M'$, consider the map $g\mapsto \langle g,f\rangle'$, $M'\to A$. It is obviously anti-$A$-linear. As the two norms are equivalent, this map is bounded by $c \|f\|_{M'}$. Hence this map is a functional on $M'$, i.e. is an element of $M''$. This gives a bounded map $\Phi:M'\to M''$, $\|\Phi\|\le c^2$, because
$$ 
\|\Phi(f)(g)\|=\|\<g,f\>'\|\le \|g\|'\|f\|'\le c^2 \|g\|_{M'} \|f\|_{M'}. 
$$
Consider the composition $I\circ\Phi:M'\to M'$. Then, for any $f\in M'$, $m\in M$, 
$$
 I\circ\Phi(f)(m)=\Phi(f)(\widehat m)=\<\widehat{m},f\>'=f(m). 
$$
Thus, the map $I\circ\Phi$ is the identity map. Therefore, $I$ is an epimorphism and $I(M'')=M'$.
Also, since $I$ is an isometry, $\Phi$ as an isometric isomorphism and one can \emph{aposteriori} take $c=1$.
\end{proof}

 The end of the proof shows that all inner products on $M'$ for a Hilbert module with a Hilbert dual
generate the same norm, hence coincide  with the operator norm on $M'$  (see also \cite[Propostion 3]{Manuilov2022}). 

 It also follows that if $M$ has Hilbert dual then $M'$ (which is a Hilbert $C^*$-module in this case) is self-dual.

 Any self-dual Hilbert $C^*$-module is obviously a module with a Hilbert dual. It was shown in \cite{Pas1} that any Hilbert $C^*$-module over a $W^*$-algebra is a module with a Hilbert dual, and it was shown in \cite{Hamana1992,Lin1992Pacific,FrankMaNa} that the same holds for a slightly bigger class of $C^*$-algebras, namely for monotone complete ones. Moreover, Theorem 4.7 of \cite{FrankMaNa} 
(an immediate consequence of \cite{Hamana1992})
claims that any Hilbert $C^*$-module over a $C^*$-algebra $A$ has a Hilbert dual if and only if $A$ is monotone complete.  

There are also examples of Hilbert $C^*$-modules with Hilbert dual over $C^*$-algebras beyond the class of monotone complete ones.

\begin{ex}
\rm
 Let $X$ be a locally compact Hausdorff space, $M=C_0(X)$ the algebra of continuous functions on $X$ vanishing at infinity, and let $A=C_b (X)$ be its multiplier algebra, i.e. the algebra of all bounded continuous functions on $X$. Then we can consider $M$ as a Hilbert $C^*$-module over $A$.

Let us verify that $M'=C_b (X)$. The inclusion $C_b (X) \subseteq M'$ is evident. Let now $\f\in M'$. Consider a countable partition of
unity $\{f_i\}$ on $X$ consisting of real-valued functions with compact supports. Then, for any $a \in M$, one has $a=\sum_i a f_i$ uniformly.
Then $\f(a)=\sum_i \f(a f_i)=\sum_i\f(f_i a)=\sum_i a^* \f(f_i)$ uniformly. So, $\sum_i \f(f_i)$ is a multiplier of $C_0(X)$ corresponding to $\f$.

So, $M'=A$ is a Hilbert module over $A$. Then it is clear that $M''=M'$, because each $A$-functional on a unital $A$ is determined by its value at $1_A$.

Recall that a commutative algebra is monotone complete iff it is an AW*-algebra iff its spectrum is extremally disconnected. So if $X$ is not extremely disconnected (e.g. $X=(0,1)$) then  $C_0(X)$ provides an example of a Hilbert $C^*$-module with Hilbert dual, although the underlying $C^*$-algebra $C_b(X)$ is not monotone complete. 
\end{ex}

%%%%%%%%%%%%%%%%%%%%%%%%%%%%%%%%%%%%
\section{Around polar decomposition in Hilbert $C^*$-modules}

 Proposition 2 of \cite{FrTroFA} claims that if $T:M\to N$ is an injective bounded module homomorphism of Hilbert $C^*$-modules over a $W^*$-algebra, $T(M)^{\perp\perp}=N$ and if $M$ is self-dual then $T(T^*T)^{-1/2}$ is a bounded module isomorphism $M\cong N$  (although $(T^*T)^{-1/2}$ may be unbounded). S. Ivkovic has informed us that there is a problem in the proof, thus urging us to revise this claim. Here is a counterexample.

\begin{ex}\rm
Let $A=l_\infty$ be the $W^*$-algebra of bounded sequences, $M=A$. We shall use the notation $\delta_n\in A$ for the sequence of zeroes with the only unit at the $n$-th place, $\delta_n=(0,...,0,1,0,...)$. Consider the following closed submodule $N\subset l_2(A)$ of all sequences of the form $x=(a_1,a_2,...)$ where each $a_n\in A$, $n\in\mathbb N$, satisfies $a_n=\delta_n a_n$. If this set is not closed then take the closure. 
 
Fix $\lambda_n=n^{-1}$, and set $T(a)=(\lambda_1\delta_1 a,\lambda_2\delta_2 a,...)\in N$, $a\in A$. Then $T:M\to N$ is injective, and $T(M)^\perp=0$ in $N$. Set $f=T(T^*T)^{-1/2}$, then $f(a)=(\delta_1 a,\delta_2 a,...)$.
The range of $f$ is greater than $N$ (e.g. it contains $f(1)=(\delta_1,\delta_2,...)$, which is not in $N$). Moreover, $M=A$ is not isometrically isomorphic to $N$.
\end{ex}
  
The reason for the error in the proof of Proposition 2 of \cite{FrTroFA} is that $f$ maps $M$ not to $N$, but to the dual module $N'$. The problem with the Lin's proof of the polar decomposition \cite{Lin} is similar: he uses the polar decomposition of an operator over the enveloping $W^*$-algebra, but the unitary in this decomposition maps into the dual module.
 
The easiest way to fix  Proposition 2 of \cite{FrTroFA} is to add the requirement of self-duality for $N$
(see Corollary \ref{teo:corrected} below).

We want to pursue this issue further and to remove the requirement of self-duality for $N$.
 
Let $T:K\to M$ be an operator between two Hilbert $C^*$-modules. Define $T^\#:M\to K'$ by $(T^\#m)(k)=\langle Tk,m\rangle$, $m\in M$, $k\in K$. If $K\subset M$ then we denote by $J$ the corresponding inclusion, $J:K\to M$.

\begin{lem}\label{lem:inner}	
Suppose that $K\subset M$, $K^\perp=0$, $M$ is self-dual and $K$ has a Hilbert dual. Then $\|\langle J^\#m,J^\#m\rangle'\|=\|\langle m,m\rangle\|$ for any $m\in M$.  Also $J^\#:M\to K'$ is an isomorphism.
\end{lem}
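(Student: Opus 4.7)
The core idea is to show that $J^\#$ transports the inner product on $M$ to the Hilbert inner product on $K'$, from which both the norm equality and the module isomorphism follow at once. The argument rests on two parallel applications of self-duality of $M$: one identifies the transported inner product with the original, and one produces preimages under $J^\#$.

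First, $J^\# m\in K'$: anti-$A$-linearity is immediate, and Cauchy--Schwarz in $M$ gives $\|J^\# m\|_{K'}\le \|m\|$. For $k\in K$ one has $J^\# k=\widehat k$, the canonical image in $K'$. Injectivity follows from $K^\perp=0$: if $\langle k,m\rangle=(J^\# m)(k)=0$ for every $k\in K$, then $m=0$.

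The heart of the proof is the inner-product identity
\[
\langle J^\# m_1,J^\# m_2\rangle'=\langle m_1,m_2\rangle,\qquad m_1,m_2\in M,
\]
which yields the norm equality on setting $m_1=m_2$. Fix $m_2$ and let $\Phi(m_1)$ denote the difference of the two sides. Then $\Phi$ is anti-$A$-linear in $m_1$, and a double application of Cauchy--Schwarz (first in $M$, then in the Hilbert $C^*$-module $K'$, the latter relying on the remark after the preceding proposition that on $K'$ the operator norm coincides with the Hilbert norm) shows $\Phi\in M'$. For $m_1=k\in K$, the defining Hilbert-dual identity gives $\langle\widehat k,J^\# m_2\rangle'=(J^\# m_2)(k)=\langle k,m_2\rangle$, so $\Phi|_K=0$. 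Self-duality of $M$ then provides a unique $v\in M$ with $\Phi=\widehat v$, and $\widehat v|_K=0$ forces $v\in K^\perp=0$, so $\Phi\equiv 0$. Surjectivity is a mirror-image argument: given $f\in K'$, the map $\psi(m):=\langle J^\# m,f\rangle'$ lies in $M'$ by the same Cauchy--Schwarz bound, hence equals $\widehat{m_f}$ for some $m_f\in M$; evaluating at $k\in K$ and using $\langle\widehat k,f\rangle'=f(k)$ gives $\langle k,m_f\rangle=f(k)$, i.e.\ $J^\# m_f=f$.

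I expect the main difficulty to lie in the estimates required to place $\Phi$ and $\psi$ in $M'$: they rely crucially on the hypothesis that $K$ has Hilbert dual, so that $K'$ is a genuine Hilbert $C^*$-module and its operator norm equals its Hilbert norm. Once these bounds are secured, the rest is a clean self-duality argument encoding the slogan that on a self-dual module a functional is determined by its restriction to any submodule with trivial orthogonal complement.
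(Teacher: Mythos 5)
Your proposal is correct, and its engine is the same as the paper's: self-duality of $M$ applied to the functional $m\mapsto\langle J^\#m,f\rangle'$ produces a preimage $m_f$ of any $f\in K'$ (this is exactly the paper's map $Q$), and $K^\perp=0$ is what forces the various functionals that vanish on $K$ to vanish identically. The one genuine difference is how the norm equality is obtained. The paper shows that both $J^\#$ and $Q$ are contractions and that $Q\circ J^\#=\operatorname{id}_M$, $J^\#\circ Q=\operatorname{id}_{K'}$, and deduces that both are isometries; the exact identity $\langle J^\#m,J^\#m\rangle'=\langle m,m\rangle$ is then extracted separately in Corollary~\ref{ravenstvo} by sandwiching. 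You instead prove the polarized identity $\langle J^\#m_1,J^\#m_2\rangle'=\langle m_1,m_2\rangle$ directly, by observing that the difference $\Phi$ is a bounded functional on $M$ vanishing on $K$, hence represented by an element of $K^\perp=0$. This is a clean variant: it delivers Lemma~\ref{lem:inner} and Corollary~\ref{ravenstvo} in one stroke, at the cost of needing up front that the Hilbert norm on $K'$ is (equivalent to, in fact equal to) the operator norm in order to place $\Phi$ in $M'$ --- a fact you correctly source from the remark following the proposition on $I(M'')=M'$. Both routes use exactly the same hypotheses; neither is more general than the other.
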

\begin{proof}
The map $J^\#$ is given by $(J^\#m)(k)=\langle J(k),m\rangle$. Note that $J^\#$ is a contraction. Indeed, 
\begin{eqnarray*}
\|J^\#(m)\|&=&\sup_{k\in K,\|k\|\leq 1}\|J^\#(m)(k)\|\\
&=&\sup_{k\in K,\|k\|\leq 1}\|\langle J(k),m\rangle\|\leq\|m\|. 
\end{eqnarray*}
Thus, 
\begin{equation}\label{eq:estim_J_dies}
\| \langle J^\#m,J^\#m\rangle' \| \leq \| \langle m,m\rangle \|
\end{equation}
for any $m\in M$.

For $k,k'\in K$, we have
$J^\#(J(k))(k')=\<J(k'),J(k)\>=\<k',k\>$, i.e., $J^\#(J(k))=\widehat{k}$.

Let us define a map $Q:K'\to M$. Let $\tau\in K'$. Then the map 
$$
m\mapsto\bar\tau(m)=\langle J^\#m, \tau \rangle'
$$ 
is a functional on $M$ (algebraic properties are obvious, and boundedness follows from that of $J^\#$). Then self-duality of $M$ implies existence of a unique $m_\tau\in M$ such that  $\bar\tau(m)=\langle m,m_\tau\rangle$  for any $m\in M$. Set $Q(\tau)=m_\tau$. 

Then $\bar\tau(m_\tau)=\langle m_\tau,m_\tau\rangle$, 
and by (\ref{eq:estim_J_dies}) one has
\begin{eqnarray*}
\|m_\tau\|^2&=&\|\langle m_\tau,m_\tau\rangle\|=\|\langle\tau,J^\#(m_\tau)\rangle'\|\\
&\leq&\|\tau\| \cdot \|\langle J^\#(m_\tau),J^\#(m_\tau)\rangle'\|^{1/2}\\
&\leq& \|\tau\|\cdot \|\langle m_\tau,m_\tau\rangle\|^{1/2}\\
&=&\|\tau\|\cdot \|m_\tau\|,
\end{eqnarray*}
whence $\|m_\tau\|\leq\|\tau\|$, and $\|Q\|\leq 1$.

Consider the composition $Q\circ J^\#:M\to M$. Set $\tau=J^\#(m)$, $n=m_\tau$. Then $Q\circ J^\#(m)=n$, and
$$
\langle k,n\rangle=\bar\tau(k)=\tau(k)=\langle k,m\rangle
$$
for any $k\in K$. As $K^\perp=0$, we have $n=m$, i.e. $Q\circ J^\#=\operatorname{id}_M$. Since both $Q$ and $J^\#$ are contractions, $J^\#$ is an isometric embedding.

Taking the composition in the opposite order, we have, for any $\tau\in K'$ and $k \in K$,
\begin{eqnarray*}
(J^\#\circ Q (\tau) )(k) &=& (J^\#(m_\tau) )(k) = \<J(k),m_\tau\>\\
&=&\overline{\tau}(J(k))=\<J^\# J(k),\tau\>'\\
&=& \<\widehat{k},\tau\>'=
\tau(k) 
\end{eqnarray*}
and $J^\#\circ Q=\operatorname{id}_{K'}$. Hence, as above, $Q$  is an isometric embedding. Thus, $Q$ and $J^\#$ are isometric isomorphisms.
\end{proof}

\begin{cor}\label{ravenstvo}
The map $J^\#$ satisfies $\langle J^\#m,J^\#m\rangle'=\langle m,m\rangle$ for any $m\in M$.
\end{cor}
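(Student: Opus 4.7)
The plan is to unwind the definitions introduced in the proof of Lemma \ref{lem:inner}, rather than to prove anything genuinely new. Recall that for each $\tau\in K'$, the element $m_\tau=Q(\tau)\in M$ was defined as the unique element of $M$ satisfying $\bar\tau(m)=\langle m,m_\tau\rangle$ for every $m\in M$, where by construction $\bar\tau(m):=\langle J^\#m,\tau\rangle'$. Combining these two defining relations, for every $m\in M$ and $\tau\in K'$ we get
$$\langle J^\#m,\tau\rangle' = \langle m,Q(\tau)\rangle.$$

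Next I would specialize $\tau=J^\#n$ for an arbitrary $n\in M$. The identity $Q\circ J^\#=\operatorname{id}_M$, established in the proof of the lemma, gives $Q(J^\#n)=n$, so the previous display collapses to
$$\langle J^\#m,J^\#n\rangle' = \langle m,n\rangle \quad \text{for all } m,n\in M.$$
Setting $n=m$ yields the corollary.

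I do not expect any substantive obstacle: the corollary is bookkeeping that upgrades the norm equality $\|\langle J^\#m,J^\#m\rangle'\|=\|\langle m,m\rangle\|$ of Lemma \ref{lem:inner} to an honest equality of $A$-valued inner products, and it is read off directly from the defining relations of $Q$ and $\bar\tau$. A clumsier alternative would exploit $A$-linearity of $J^\#$ to obtain $\|a^*\langle J^\#m,J^\#m\rangle' a\|=\|a^*\langle m,m\rangle a\|$ for every $a\in A$ and then try to recover equality of the two positive elements of $A$; but the direct argument above is much cleaner and avoids any extra hypothesis on $A$.
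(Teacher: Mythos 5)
Your proof is correct, but it takes a different route from the paper's. The paper proves the corollary by a sandwich argument: since $Q$ and $J^\#$ are mutually inverse contractions (from Lemma \ref{lem:inner}), the standard operator inequality $\langle Tx,Tx\rangle\le\|T\|^2\langle x,x\rangle$ for bounded module maps gives
$\langle m,m\rangle=\langle QJ^\#m,QJ^\#m\rangle\le\langle J^\#m,J^\#m\rangle'\le\langle m,m\rangle$.
You instead read the identity directly off the construction of $Q$: the defining relations $\bar\tau(m)=\langle J^\#m,\tau\rangle'$ and $\bar\tau(m)=\langle m,Q(\tau)\rangle$ combine to $\langle J^\#m,\tau\rangle'=\langle m,Q(\tau)\rangle$, and substituting $\tau=J^\#n$ together with $Q\circ J^\#=\operatorname{id}_M$ yields the full polarized identity $\langle J^\#m,J^\#n\rangle'=\langle m,n\rangle$, of which the corollary is the diagonal case. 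Both arguments lean on the internals of the proof of Lemma \ref{lem:inner} (the paper's own proof explicitly invokes ``the Lemma and its proof''), so you are on equal footing there. Your version is arguably cleaner: it avoids the operator inequality entirely, exhibits $J^\#$ as a unitary (inner-product-preserving) map rather than merely an isometry with isometric inverse, and makes transparent that the corollary is pure bookkeeping rather than new analysis. The paper's version has the mild advantage of depending only on the contraction estimates $\|Q\|\le1$, $\|J^\#\|\le1$ and not on the particular formula defining $Q$.
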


\begin{proof}
By Lemma \ref{lem:inner} and its proof,  $J^\#$ is an isometry with the inverse isometry $Q$.
Hence,
\begin{eqnarray*}
\<m,m\>&=&\<Q J^\# m, Q J^\#m\> \le \|Q\|^2 \<J^\# m,J^\# m\>'\\
&=&\<J^\# m,J^\# m\>' \le \|J^\#\|^2 \<m,m\>\\
&=&\<m,m\>.
\end{eqnarray*}
\end{proof}

\begin{teo}\label{teo:corrected_ext}
	Let $A$ be a monotone complete $C^*$-algebra, let $M$, $N$ be Hilbert $C^*$-modules over $A$, $M$ self-dual, and let $T:M\to N$ be an operator with $(T(M))^{\perp\perp}=N$, $\operatorname{Ker}(T)=\{0\}$ and $\operatorname{Ker}(T')=\{0\}$,
where $T':N'\to M'$ is the adjoint operator. 
	Then $M \cong N'$.
\end{teo}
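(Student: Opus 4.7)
The plan is to apply Lemma~\ref{lem:inner} twice — once inside the self-dual ambient $M$, and once inside the self-dual ambient $N'$ — linking the two applications by a polar-decomposition style isometric isomorphism. Since $M$ is self-dual, the Hilbert-module adjoint $T^{*}\colon N\to M$ exists, because for each $n\in N$ the anti-$A$-linear map $m\mapsto\langle Tm,n\rangle$ lies in $M'=\widehat M$. A direct check shows $T'|_{\widehat N}=\widehat{T^{*}}$, so the hypothesis $\Ker T'=0$ forces $\Ker T^{*}=0$, which in turn gives $T(M)^{\perp}=0$ in $N$ (so the condition $T(M)^{\perp\perp}=N$ follows automatically). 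I then form $|T|:=(T^{*}T)^{1/2}$ inside the $C^{*}$-algebra of adjointable endomorphisms of $M$; the identity $\langle|T|m,|T|m\rangle=\langle Tm,Tm\rangle$ gives $\Ker|T|=\Ker T=0$, and self-adjointness then yields $(|T|(M))^{\perp}=0$ in $M$.

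Setting $L_{1}:=\overline{|T|(M)}\subset M$, Lemma~\ref{lem:inner} applies to $L_{1}\subset M$ (with $L_{1}$ having a Hilbert dual because $A$ is monotone complete), producing a Hilbert-module isomorphism $J_{1}^{\#}\colon M\to L_{1}'$. For the $N'$-side, let $L_{2}:=\overline{T(M)}\subset N$ and identify it isometrically with the closed submodule $\widehat{L_{2}}\subset N'$ via the inner-product-preserving inclusion $n\mapsto\widehat n$ (the agreement of inner products is guaranteed by Definition~\ref{def}). Using $\langle f,\widehat n\rangle'=f(n)^{*}$ one checks that $\widehat{L_{2}}^{\perp}$ in $N'$ equals $\{f\in N':T'f=0\}=0$, so Lemma~\ref{lem:inner} applied now with ambient $N'$ and submodule $\widehat{L_{2}}$ yields a Hilbert-module isomorphism $J_{2}^{\#}\colon N'\to(\widehat{L_{2}})'$.

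The bridge between the two sides is the map $V\colon|T|(M)\to T(M)$ defined by $V(|T|m):=Tm$, which is well-defined since $\Ker|T|=0$, is $A$-linear, and is isometric by the calculation $\langle Tm,Tm'\rangle=\langle m,T^{*}Tm'\rangle=\langle|T|m,|T|m'\rangle$; it therefore extends by continuity to an isometric Hilbert-module isomorphism $\bar V\colon L_{1}\to L_{2}\cong\widehat{L_{2}}$. Dualizing $\bar V$ produces a Hilbert-module isomorphism $L_{1}'\cong(\widehat{L_{2}})'$, and the composition
\[
M\;\xrightarrow{J_{1}^{\#}}\;L_{1}'\;\xrightarrow{\sim}\;(\widehat{L_{2}})'\;\xrightarrow{(J_{2}^{\#})^{-1}}\;N'
\]
delivers the required isomorphism $M\cong N'$.

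The main technical subtleties I expect are (i) legitimizing the square root $|T|$ inside the $C^{*}$-algebra of adjointable endomorphisms of $M$ (using that $M$ self-dual makes every bounded $A$-linear endomorphism adjointable) and checking that passing to the closures $L_{1}=\overline{|T|(M)}$, $\widehat{L_{2}}$ does not destroy the perp-triviality needed for Lemma~\ref{lem:inner}; and (ii) verifying that the Banach dual of the isometric $\bar V$ is actually a Hilbert-module — not merely a Banach — isomorphism. The latter ultimately rests on the fact, already exploited earlier in the paper, that for a Hilbert $A$-module with Hilbert dual the inner product on the dual is uniquely determined by its restriction to the module (via continuity and polarization), so that any isometric bijection of Hilbert modules extends uniquely to an isometric bijection of their Hilbert duals.
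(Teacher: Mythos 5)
Your proof is correct and follows essentially the same strategy as the paper's: two applications of Lemma \ref{lem:inner} linked by the isometry arising from a polar decomposition, the only difference being that you take the polar decomposition of $T$ itself (working with $L_1=\overline{|T|(M)}\subset M$ and $\overline{T(M)}\subset N\subset N'$), whereas the paper takes that of $T':N'\to M'$ (working with $K=\overline{S(N')}\subset N'$ and $\overline{T'(N')}\subset M'$), so you land on $M\cong L_1'\cong(\widehat{L_2})'\cong N'$ rather than $M'\cong K'\cong N'$. Your mirror-image version has the small bonus of showing that the hypothesis $(T(M))^{\perp\perp}=N$ is already implied by $\operatorname{Ker}(T')=\{0\}$; the only imprecision is in your closing remark, where the uniqueness of the Hilbert inner product on the dual (needed to dualize $\bar V$) is indeed true, but it is not obtained ``by continuity and polarization'' (the module need not be dense in its dual) --- rather it is the content of the Proposition of Section 3, where the map $\Phi=I^{-1}$, and hence the inner product $\langle g,f\rangle'=\Phi(f)(g)$, is shown to be forced.
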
	

\begin{proof}
We have that $T':N'\to M'$ is  a morphism of self-dual modules (see Theorem \ref{teo:pasc})
with $\operatorname{Ker}(T')=\{0\}$. 
Since $M$ is self-dual, $T$ is adjointable \cite{Pas1}. 
One has $0=\<T^*(n),m\>=\<n, T(m)\>$ and 
$\operatorname{Ker}(T)=\{0\}$ implies $(T^*(N))^\bot =\{0\}$.

Denote $\Lambda_M: M \to M'$, $m \mapsto \widehat{m}$, and $\Lambda_N: N \to N'$, $n\mapsto \widehat{n}$. Then $T^*$ and $T'$ are connected by the following
commutative diagram:
\begin{equation}\label{eq:commut_diag_T'_T*}
\xymatrix{
M' & \ar[l]_{T'} N'\\
M \ar[u]^\cong_{\Lambda_M}& \ar[l]^{T^*} N \ar[u]_{\Lambda_N}.
}
\end{equation}
Thus $(T^*(N))^\bot =\{0\}$ implies $(T'(N'))^\bot =\{0\}$.

Since $N'$ is self-dual, $T'$ is adjointable \cite{Pas1}.
Let $S=((T')^*T')^{1/2}:N'\to N'$. Then $S(N')^\perp=\{0\}$. 
Indeed, if $f\in S(N')^\perp_{N'}$, then for $f'=S(f)$, one has 
$$
0=\<S(f'),f\>=\<(T')^*T' f,f\>=\<T'f,T'f\>
$$
and $f=0$. 
Denote the norm closure of $S(N')$ by $K$.  If $J$ denotes the inclusion of $K$ in $N'$ then $J^\#:N'\to K'$ is the above isometric isomorphism
(since $A$ is monotone complete, all modules are with a Hilbert dual).
We can apply Lemmas  \ref{lem:inner} and \ref{ravenstvo} 
because $N'$ is self-dual and $K^\bot \subseteq S(N')^\perp=\{0\}$.

Since $\<T'f,T'f\>=\<Sf,Sf\>$,
the map $U_0:S(N')\to M'$ defined by $U_0(Sf):=T'f$ is an isometric embedding and extends by continuity to an isometric embedding $U:K\to M'$ (because $M'$ is complete). 
Note that 
$$
U(K)^\bot \subseteq (U_0(S(N')))^\bot =(T'(N'))^\bot =\{0\}.
$$
So, since  $M'$ is self-dual, we can apply Lemmas 
\ref{lem:inner} and \ref{ravenstvo}. We obtain in this way an isometric isomorphism
$U^\#: M' \to K'$. Thus $(J^\#)^{-1}\circ U^\#: M'\cong K' \cong N'$ is the desired
isometric isomorphism.
\end{proof}	

\begin{cor}\label{teo:corrected}
	Let $A$ be a monotone complete $C^*$-algebra, let $M$, $N$ be self-dual Hilbert $C^*$-modules over $A$, and let $T:M\to N$ be an operator with $\operatorname{Ker}(T)=\{0\}$. 
	Suppose that $(T(M))^{\perp\perp}=N$.
	Then $M\cong N$.
\end{cor}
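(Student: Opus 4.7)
The plan is to deduce this corollary directly from Theorem~\ref{teo:corrected_ext}. The only hypothesis of that theorem not explicitly present here is $\operatorname{Ker}(T')=\{0\}$, and once we obtain its conclusion $M\cong N'$, the self-duality of $N$ will convert this into $M\cong N$ via the canonical isomorphism $\Lambda_N:N\to N'$, $n\mapsto\widehat{n}$.

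To verify $\operatorname{Ker}(T')=\{0\}$, I would use that both $M$ and $N$ being self-dual makes $T$ adjointable (\cite{Pas1}), yielding $T^{*}:N\to M$. Under the identification $\Lambda_N$, the dual operator $T'$ corresponds to $T^{*}$; indeed, for $n\in N$ and $m\in M$,
\[
(T'\widehat{n})(m)=\widehat{n}(Tm)=\langle Tm,n\rangle=\langle m,T^{*}n\rangle=\widehat{T^{*}n}(m),
\]
which is exactly the content of the commutative diagram (\ref{eq:commut_diag_T'_T*}) appearing in the proof of Theorem~\ref{teo:corrected_ext}. Hence $\operatorname{Ker}(T')=\{0\}$ is equivalent to $\operatorname{Ker}(T^{*})=\{0\}$, and the routine identity $\operatorname{Ker}(T^{*})=T(M)^{\perp}$ reduces the task to showing $T(M)^{\perp}=\{0\}$. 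But any $l\in T(M)^{\perp}$ also belongs to $T(M)^{\perp\perp}=N$, so $l$ is orthogonal to itself and $\langle l,l\rangle=0$ forces $l=0$.

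With $\operatorname{Ker}(T')=\{0\}$ in hand, all hypotheses of Theorem~\ref{teo:corrected_ext} are satisfied, so $M\cong N'$; composing with $\Lambda_N^{-1}$ yields $M\cong N$. I do not foresee any real obstacle here, since the whole point is the elementary observation that in the fully self-dual setting the extra hypothesis on $T'$ is automatic from $(T(M))^{\perp\perp}=N$. The only place one must be a little careful is the identification of $T'$ with $T^{*}$, but this is furnished verbatim by the diagram already used in the proof of the main theorem.
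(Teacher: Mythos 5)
Your proposal is correct and follows essentially the same route as the paper: identify $T'$ with $T^*$ via diagram (\ref{eq:commut_diag_T'_T*}), deduce $\operatorname{Ker}(T')=\{0\}$ from $\operatorname{Ker}(T^*)=T(M)^{\perp}=\{0\}$ (which follows from $(T(M))^{\perp\perp}=N$), and then apply Theorem \ref{teo:corrected_ext} together with the self-duality of $N$. No gaps.
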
	
	
\begin{proof}
In the self-dual case $\Lambda_N$ is an isomorphism and
$T' = \Lambda_M \circ T^* \circ (\Lambda_N)^{-1}$
(see (\ref{eq:commut_diag_T'_T*})).
If $T(M)^\bot =\{0\}$, then $\Ker(T^*)=\{0\}$. Indeed,
suppose that $T^*(n)=0$, then $\<T^*n,m\>=0$ for any $m\in M$, so 
$\<n,Tm\>=0$ for any $m\in M$, i.e. $n\bot T(M)$. Hence $n=0$ and $\Ker(T^*)=\{0\}$.
Thus, the property $\operatorname{Ker}(T')=\{0\}$ follows from the other conditions in this case.
\end{proof}	

%\begin{cor}
%Under the assumptions of Theorem \ref{teo:corrected} 
%or Theorem \ref{teo:corrected_ext}
%there exist a unitary operator $V:M\to N'$ and a selfadjoint positive operator $S$ on $M$ such that $T=VS$.
%\end{cor}

\section{Decompositions into direct sums}
 
Here we explain that Corollary 3 in \cite{FrTroFA}, which was based on an erroneous statement, is nevertheless true.
As a bonus, we extend it from the case of Hilbert $C^*$-modules over $W^*$ algebras to that over monotone 
complete $C^*$-algebras.

The next statement is known for Hilbert $C^*$-modules over $W^*$-algebras (\cite{FrTroFA}, \cite[Lemma 3.6.1]{MTBook}).

\begin{lem}\label{lem:botbot_ker}
Let $L$ be a self-dual Hilbert $C^*$-module, and let $K\subseteq L$ be a Hilbert $C^*$-submodule with a Hilbert dual. Then we have the orthogonal decomposition:
$
L=K^{\bot\bot}\oplus K^\bot.
$
\end{lem}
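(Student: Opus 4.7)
The plan is to construct the direct-summand decomposition by producing a projection $L\to K^{\bot\bot}$ of the form $\Phi^*\circ\Phi$, where $\Phi\colon L\to K'$ is the natural restriction map and $\Phi^*$ is its Riesz-type adjoint. The trivial intersection $K^{\bot\bot}\cap K^\bot=\{0\}$ is immediate, since any common element $x$ satisfies $\<x,x\>=0$; so the whole problem reduces to showing $L=K^{\bot\bot}+K^\bot$.

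First I would introduce $\Phi\colon L\to K'$ by $\Phi(x)(k):=\<k,x\>$. This is a bounded $A$-linear map with $\ker\Phi=K^\bot$ and $\Phi(k)=\widehat k$ for every $k\in K$. The heart of the proof is the construction of an adjoint $\Phi^*\colon K'\to L$. Because $K$ has a Hilbert dual, $K'$ is itself a Hilbert $C^*$-module, with an inner product $\<\cdot,\cdot\>'$ satisfying $\<\widehat k,\psi\>'=\psi(k)$ for all $k\in K$, $\psi\in K'$ (Definition~\ref{def}). For each fixed $\psi\in K'$, the assignment $x\mapsto\<\Phi(x),\psi\>'$ is an anti-$A$-linear functional on $L$, bounded by Cauchy--Schwarz in $K'$ together with the contractivity of $\Phi$, hence an element of $L'$. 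Self-duality of $L$ then yields a unique $\Phi^*(\psi)\in L$ with $\<x,\Phi^*(\psi)\>=\<\Phi(x),\psi\>'$ for every $x\in L$, and standard checks show that $\Phi^*$ is $A$-linear and bounded.

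Two short verifications then close the argument. For $k\in K$ and $\psi\in K'$, using $\Phi(k)=\widehat k$ and the Hilbert-dual identity one has $\Phi(\Phi^*\psi)(k)=\<k,\Phi^*\psi\>=\<\widehat k,\psi\>'=\psi(k)$, so $\Phi\circ\Phi^*=\operatorname{id}_{K'}$. Next, for any $z\in K^\bot=\ker\Phi$ and $\psi\in K'$, $\<z,\Phi^*\psi\>=\<\Phi(z),\psi\>'=0$, whence $\Phi^*(K')\subseteq K^{\bot\bot}$. Therefore, for any $x\in L$ the element $y:=\Phi^*\Phi(x)$ lies in $K^{\bot\bot}$ and satisfies $\Phi(y)=\Phi(x)$, so $x-y\in\ker\Phi=K^\bot$ and $x=y+(x-y)$ is the desired decomposition.

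The only non-routine step is the existence of $\Phi^*$, and this is precisely where both hypotheses are used: self-duality of $L$ provides the Riesz representation needed to produce $\Phi^*(\psi)\in L$, while the Hilbert-dual structure on $K'$ is what gives meaning to the scalar $\<\Phi(x),\psi\>'$ in the first place. Everything else in the argument is a formal manipulation of these two ingredients.
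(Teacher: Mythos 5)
Your proof is correct and is essentially the paper's own argument: your $\Phi$ is the paper's Banach adjoint $R=i'$ of the inclusion composed with the canonical identification $L\cong L'$, your $\Phi^*$ is its Hilbert adjoint obtained from self-duality of $L$, and the identities $\Phi\circ\Phi^*=\operatorname{id}_{K'}$ and $\Phi^*(K')\subseteq K^{\bot\bot}$ are exactly what the paper verifies before reading off the decomposition from the projection $\Phi^*\Phi$.
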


\begin{proof}
For the inclusion $i:K\subseteq L$,
consider the Banach adjoint map $R=i': L' \to K'$ of self-dual Hilbert modules. The map $R$
is given by $R(f)(k)=f(i(k))$, $f\in L'$, $k\in K$, and 
satisfies the formula 
\begin{equation}\label{m1}
R(\widehat{l}\,)(k)=\widehat{l}(i(k))=\langle i(k),l\rangle_L=\langle \widehat{i(k)},\widehat{l}\rangle_{L'}, \quad l\in L, k\in K. 
\end{equation}
As both $K'$ and $L$ are self-dual, $R$
admits the Hilbert $C^*$-module adjoint morphism $R^*: K' \to L'\cong L$.

We claim that $R^*R$ is a projection, and that $\Ker(R)=\Ker(R^*R)$. 

The first claim would follow if we show that $RR^*=\id_{K'}$. Note that $R^*$ satisfies $\langle f,R(\widehat{l}\,)\rangle_{K'}=\langle R^*(f),\widehat{l}\,\rangle_{L'}$, $f\in K'$, $l\in L$. Taking here $f=\widehat{k}$, we get $\langle \widehat{k},R(\widehat{l}\,)\rangle_{K'}=\langle R^*(\widehat{k}),\widehat{l}\,\rangle_{L'}$. Compairing this with (\ref{m1}), we see that $R^*(\widehat{k})=\widehat{i(k)}$ for any $k\in K$. Next,
\begin{eqnarray*}
\langle RR^*(\widehat{k}),\widehat{r}\rangle_{K'}&=&\langle R^*(\widehat{k}),R^*(\widehat{r})\rangle_{L'}\\
&=&\langle \widehat{i(k)},\widehat{i(r)}\rangle_{L'}=\langle i(k),i(r)\rangle_{L}\\
&=&\langle k,r\rangle_{K}=\langle \widehat{k},\widehat{r}\rangle_{K'} 
\end{eqnarray*}
for any $k,r\in K$, hence $\langle RR^*(\widehat{k})-\widehat{k},\widehat{r}\rangle_{K'}=0$, which implies that $RR^*(\widehat{k})=\widehat{k}$ for any $k\in K$.  
Now let $f\in K'$. Then
$$
\langle RR^*(f),\widehat{k}\rangle_{K'}=\langle f,RR^*(\widehat{k})\rangle_{K'}=\langle f,\widehat{k}\rangle_{K'},
$$
or, alternatively, $RR^*(f)(k)=f(k)$ for any $k\in K$, hence $RR^*=\id_{K'}$. 

For the second claim, it is clear that $\Ker R\subset\Ker R^*R$. To prove the opposite inclusion suppose that $R(x)\neq 0$ for some $x\in L'=L$. Then $\<R(x),R(x)\>=\<R^*R(x),x\>\ne 0$, hence, $R^*Rx\ne 0$.

%we have
%$$
%R(x)=0 \quad \Rightarrow \quad \<R(x),R(y)\>=0\:\: \forall y 
%\quad \Leftrightarrow \quad \<(R^*R)(x),y\>=0\:\: \forall y
%\quad \Leftrightarrow \quad (R^*R)(x)=0,
%$$
%$$
%R(x)\ne 0 \quad \Leftrightarrow \quad \<R(x),R(x)\>\ne 0  
%\quad \Leftrightarrow \quad \<R^*R(x),x\>\ne 0
%\quad \Rightarrow \quad R^*R(x)\ne 0.
%$$

It remains to observe that $K^\bot = \Ker(R)$ and thus is orthogonally
complemented.
\end{proof}	

\begin{cor}\label{cor:self_du}
	Suppose that $M$ and $L$ are self-dual modules over a monotone complete
algebra, $T:M\to L$ is 
	an (adjointable) morphism with $\Ker(T)=\{0\}$. Then $M\cong (T(M))^{\bot\bot}$. 
\end{cor}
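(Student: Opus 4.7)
The plan is to reduce to Corollary~\ref{teo:corrected} by restricting the codomain of $T$. Put $N:=(T(M))^{\bot\bot}\subseteq L$ and let $K:=\overline{T(M)}$ be the norm closure of the range of $T$, so that $K^{\bot}=(T(M))^{\bot}$ and hence $K^{\bot\bot}=N$.

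First I would apply Lemma~\ref{lem:botbot_ker} to the inclusion $K\subseteq L$. Since $A$ is monotone complete, every Hilbert $A$-module has a Hilbert dual; in particular $K$ does, so the lemma yields the orthogonal decomposition
\[
L \;=\; K^{\bot\bot}\oplus K^{\bot} \;=\; N\oplus K^{\bot}.
\]
Next I would verify that $N$ is itself self-dual, i.e.\ that an orthogonal direct summand of a self-dual module is self-dual. Indeed, the decomposition provides a bounded adjointable projection $P:L\to N$; given $f\in N'$, the composition $f\circ P$ lies in $L'$, and self-duality of $L$ represents it as $\langle\cdot,\ell\rangle_L$ for a unique $\ell\in L$. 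Decomposing $\ell=n+n'$ with $n\in N$ and $n'\in K^{\bot}$, one obtains $f(x)=\langle x,n\rangle_N$ for every $x\in N$, so $f=\widehat n$.

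Now I would view $T$ as a morphism $T:M\to N$ between two self-dual modules over the monotone complete algebra $A$. Injectivity is inherited from the hypothesis, and a short check shows that $(T(M))^{\bot\bot}_{N}=N$: any $n\in N$ orthogonal in $N$ to $T(M)$ lies in $K^{\bot}\cap N=K^{\bot}\cap K^{\bot\bot}=\{0\}$, hence $(T(M))^{\bot}_{N}=\{0\}$. All hypotheses of Corollary~\ref{teo:corrected} are therefore satisfied for $T:M\to N$, and it gives $M\cong N=(T(M))^{\bot\bot}$.

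I do not anticipate a genuine obstacle, since Lemma~\ref{lem:botbot_ker} and Corollary~\ref{teo:corrected} carry essentially all the weight. The one small point that deserves care is the self-duality of the orthogonal summand $N$, which is the standard projection/extension argument sketched above; everything else reduces to bookkeeping about $K$, $K^{\bot}$, and $K^{\bot\bot}$.
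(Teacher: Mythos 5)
Your proposal is correct and follows essentially the same route as the paper: decompose $L=(T(M))^{\perp\perp}\oplus (T(M))^{\perp}$ via Lemma~\ref{lem:botbot_ker}, observe that the orthogonal summand $N=(T(M))^{\perp\perp}$ is self-dual, and apply Corollary~\ref{teo:corrected} to $T:M\to N$. You merely spell out two points the paper leaves implicit (self-duality of the summand and the verification that $(T(M))^{\perp}_{N}=\{0\}$), which is fine.
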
	

\begin{proof}
Recall that all Hilbert $C^*$-modules over a monotone complete $C^*$-algebra have Hilbert dual. By Lemma \ref{lem:botbot_ker}, we have $L= (T(M))^{\bot\bot} \oplus (T(M))^{\bot}$. Set $N:= (T(M))^{\bot\bot}\supseteq T(M)$. Then $N= (T(M))^{\bot\bot}$ is a direct orthogonal summand of a self-dual module and thus a self-dual module itself. Consider $T$ as a map from $M$ to $N$ (we keep the same notation for this morphism). Then $T:M\to N$ satisfies the assumptions of Corollary \ref{teo:corrected}, whence $M\cong N'$. Self-duality of $N$ finishes the proof.
\end{proof}	

	Note that, for an adjointable operator $F:M\to N$, we have
	\begin{eqnarray*}
(\Im F)^{\bot}&=&\{y \in N \colon \<Fx,y\>=0 \mbox{ for all } x\in M\}\\
&=&
\{y \in N \colon \<x,F^*y\>=0 \mbox{ for all } x\in M\}\\
&=&\{y \in N \colon F^*y=0 \}\\
&=&\Ker F^*.
	\end{eqnarray*}
Hence,
\begin{equation}\label{eq:ker_and_coker}
(\overline{ \Im F})^{\bot}=(\Im F)^{\bot}=\Ker F^*\qquad\mbox{for an adjointable }F.
\end{equation}

From this observation and Lemma \ref{lem:botbot_ker} we obtain the following statement.

\begin{cor}\label{cor:K_botbot=K}
Suppose that $L$ is a self-dual Hilbert $C^*$-module and let $K\subseteq L$ be a Hilbert $C^*$-submodule with a Hilbert dual that is the kernel of a morphism $F:L\to M$, $K=\Ker F$. 
Since $L$ is self-dual, $F$ is adjointable and we require $R:=\overline{ \Im F^*}$ to be a module with Hilbert
dual.
Then $K=K^{\bot\bot}$ and we have the orthogonal decomposition:
$
L=K^{\bot\bot}\oplus K^\bot=K\oplus K^\bot.
$ 
\end{cor}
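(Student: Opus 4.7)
The plan is to combine the identity (\ref{eq:ker_and_coker}) with two applications of Lemma \ref{lem:botbot_ker}: first to the submodule $R$ to get $K=K^{\perp\perp}$, then to the submodule $K$ to get the splitting.

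First, since $L$ is self-dual, $F$ is adjointable, and its adjoint $F^*:M\to L$ is itself adjointable with $(F^*)^*=F$. Applying (\ref{eq:ker_and_coker}) to $F^*$ in place of $F$ yields
\[
R^\perp \;=\; (\overline{\Im F^*})^\perp \;=\; \Ker (F^*)^* \;=\; \Ker F \;=\; K.
\]

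Next, since $R$ has a Hilbert dual by hypothesis, Lemma \ref{lem:botbot_ker} applies to the submodule $R\subseteq L$ and yields the orthogonal decomposition
\[
L \;=\; R^{\perp\perp}\oplus R^\perp \;=\; R^{\perp\perp}\oplus K.
\]
From this decomposition one reads off directly that $K^\perp=R^{\perp\perp}$ and $(R^{\perp\perp})^\perp=R^\perp$, by the standard two-line argument (for $x$ in the relevant complement, write $x=a+b$ along the two summands; pairing $x$ with the ``wrong'' component forces that component to vanish). Taking $\perp$ of the first of these equalities gives
\[
K^{\perp\perp} \;=\; (R^{\perp\perp})^\perp \;=\; R^\perp \;=\; K.
\]
Finally, $K$ itself has a Hilbert dual, so a second application of Lemma \ref{lem:botbot_ker}, now to $K\subseteq L$, produces $L=K^{\perp\perp}\oplus K^\perp$; substituting $K^{\perp\perp}=K$ from the previous step yields the desired decomposition $L=K\oplus K^\perp$.

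The key conceptual move is the identification $K=R^\perp$ via (\ref{eq:ker_and_coker}), which reduces the nontrivial-looking assertion $K=K^{\perp\perp}$ to the tautology $R^\perp=(R^{\perp\perp})^\perp$ once the orthogonal decomposition of $L$ along $R$ is in hand. Accordingly, the Hilbert-dual hypothesis on $R$ is essential: it is precisely what allows Lemma \ref{lem:botbot_ker} to be invoked for $R$, and without it one could not bootstrap from $R^\perp=K$ to $R^{\perp\perp}=K^\perp$. Beyond this observation, the proof is purely formal and no genuine obstacle is expected.
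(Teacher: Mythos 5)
Your proof is correct and follows essentially the same route as the paper: identify $K=\Ker F=(\overline{\Im F^*})^\perp=R^\perp$ via (\ref{eq:ker_and_coker}), then apply Lemma \ref{lem:botbot_ker} to $R$ (and to $K$) and compare the resulting orthogonal decompositions. The only cosmetic difference is that you extract $K^{\perp\perp}=K$ directly from the decomposition along $R$ (making the second application of the lemma to $K$ essentially redundant), whereas the paper gets it by comparing the two decompositions; the content is the same.
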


\begin{proof}
 By (\ref{eq:ker_and_coker}),   
$K=\Ker F = (\overline{ \Im F^*})^\bot=R^\bot$. By Lemma \ref{lem:botbot_ker} we have
orthogonal decompositions
$L=R^{\bot\bot} \oplus R^\bot$ and $L=K^{\bot\bot} \oplus K^\bot$. 
From the first decomposition we obtain $L=K\oplus K^\bot$. Then from the second one we obtain $K=
K^{\bot\bot}$.
\end{proof}

\section{Application to Fredholm operators}

Until now, all direct sums of submodules were orthogonal, but now we have to pay atention to the difference between Banach direct sums and Hilbert ones. Our notation $\oplus$ means the Banach direct sum, and it should be clear from the context, whether the summands are orthogonaal or not.

The following definition of Mishchenko-Fomenko $C^*$-Fredholm operator \cite{MF} plays an important role in noncommutative geometry.  
\begin{dfn}
\rm
An adjointable operator $F:M \to N$ between countably generated Hilbert $C^*$-modules over a unital
$C^*$-algebra $A$ is called $A$-\emph{Fredholm} if there exist direct sum decompositions 
${M}={M}_0 \oplus {M}_1$ and
${N}={N}_0 \oplus {N}_1$ such that 
\begin{equation}\label{summa}
F=
\left(\begin{matrix}
		F_0&0\\
		0& F_1
	\end{matrix}\right): 
{M}_0 \oplus {M}_1   \to   {N}_0 \oplus {N}_1,
\end{equation}
$F_0: {M}_0 \cong {N}_0$, $F_1:{M}_1 \to {N}_1$,
where
${M}_1$ and ${N}_1$ are projective finitely generated
modules and $M_0$ is orthogonal to $M_1$. 
Its \emph{index} is $\mathrm{index}(F)=[M_1]-[N_1]\in K_0(A)$, where $[L]$ denotes the $K$-theory class of the projective finitely generated module $L$ over $A$.
\end{dfn}

Index is stable under $A$-compact perturbations (\cite{MF}, see also \cite{MTBook}), in particular, 
$$
\mathrm{index} \left(\begin{matrix}
		F_0&0\\
		0& F_1
	\end{matrix}\right)
= \mathrm{index} \left(\begin{matrix}
		F_0&0\\
		0& 0
	\end{matrix}\right).
$$ 
For the latter operator the index can be calculated like in the classical (Hilbert space) case:
$[\Ker ] - [\mathrm{Coker}]$. But for developing of elements of Hodge theory in this setting
(see e.g. \cite{FrTroFA}) one needs to have something like this honestly (i.e. not after an $A$-compact
perturbation). The next statement solves this problem.

\begin{teo}\label{teo:Fredh}
Suppose that $M$ and $N$ are countably generated $A$-modules and $F:M\to N$ is an $A$-Fredholm operator, where $A$ is a monotone complete $C^*$-algebra.
Then $\Ker F$ and $(\Im F)^\bot$ are finitely generated projective $A$-modules and
$$
\mathrm{index}(F)=[\Ker F]-[(\Im F)^\bot]\in K_0(A).
$$	
\end{teo}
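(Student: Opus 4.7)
The plan is to use the Mishchenko--Fomenko decomposition $F = F_0 \oplus F_1 \colon M_0 \oplus M_1 \to N_0 \oplus N_1$ and reduce to the restriction $F_1 \colon M_1 \to N_1$, an adjointable operator between finitely generated projective (hence self-dual) modules. Since $F_0$ is an isomorphism, $\Ker F = \Ker F_1 \subseteq M_1$. Applying Corollary \ref{cor:K_botbot=K} to both $F_1$ and $F_1^*$ (the Hilbert-dual hypotheses being automatic since $A$ is monotone complete), and Corollary \ref{cor:self_du} to $F_1|_{(\Ker F_1)^\perp}$, one obtains orthogonal decompositions
\[
M_1 = \Ker F_1 \oplus (\Ker F_1)^\perp, \qquad N_1 = \Ker F_1^* \oplus (\Ker F_1^*)^\perp,
\]
together with an isomorphism $(\Ker F_1)^\perp \cong (\Ker F_1^*)^\perp$. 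This shows that $\Ker F$ is a direct summand of $M_1$, hence finitely generated projective, and yields in $K_0(A)$ the identity
\[
[M_1] - [N_1] = [\Ker F_1] - [\Ker F_1^*].
\]

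Next, I identify $(\Im F)^\perp = \Ker F^*$ (see (\ref{eq:ker_and_coker})) with $\Ker F_1^*$ as an $A$-module. Since $N_1$ is self-dual, its inclusion into $N$ is adjointable, so $N = N_1 \oplus N_1^\perp$ is an \emph{orthogonal} decomposition. With respect to the orthogonal decompositions $M_0 \oplus M_1$ and $N_1^\perp \oplus N_1$, the operator $F$ takes the block form
\[
F = \begin{pmatrix} T & 0 \\ S & F_1 \end{pmatrix}, \qquad F^* = \begin{pmatrix} T^* & S^* \\ 0 & F_1^* \end{pmatrix},
\]
where $T := \pi_{N_1^\perp} F|_{M_0} \colon M_0 \to N_1^\perp$ and $S := \pi_{N_1} F|_{M_0} \colon M_0 \to N_1$ are both adjointable. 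A pair $(y, n_1) \in N_1^\perp \oplus N_1$ lies in $\Ker F^*$ precisely when $F_1^* n_1 = 0$ and $T^* y = -S^* n_1$. Provided $T^*$ is bijective, the projection $(y, n_1) \mapsto n_1$ is then an $A$-module isomorphism $\Ker F^* \cong \Ker F_1^*$, whence $(\Im F)^\perp$ is finitely generated projective and
\[
[\Ker F] - [(\Im F)^\perp] = [\Ker F_1] - [\Ker F_1^*] = [M_1] - [N_1] = \mathrm{index}(F).
\]

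The main obstacle is to establish that $T$ is an isomorphism of Hilbert $C^*$-modules, equivalently that $T^*$ is bijective. Bijectivity of $T$ as a bounded $A$-module map is formal: both $N_0$ and $N_1^\perp$ are Banach complements of $N_1$ in $N$, so the restriction $\pi_{N_1^\perp}|_{N_0} \colon N_0 \to N_1^\perp$ is a bounded bijection, and $T$ is its composition with the isomorphism $F_0 \colon M_0 \cong N_0$; adjointability of $T$ is immediate from its construction as a composition of adjointable maps. The delicate step is to conclude that $T^{-1}$ is itself adjointable. This is a standard result about adjointable bijections between Hilbert $C^*$-modules; alternatively, in the present monotone complete setting one can extend $T$ to the self-dual completions $T'' \colon M_0'' \to (N_1^\perp)''$, invoke Corollary \ref{teo:corrected} to conclude $T''$ is an isomorphism of self-dual modules, and then verify that the resulting inverse restricts to a map $N_1^\perp \to M_0$.
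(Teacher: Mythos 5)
Your argument is correct, and its overall strategy coincides with the paper's: reduce to the finitely generated projective corner $F_1:M_1\to N_1$, apply Corollaries \ref{cor:K_botbot=K} and \ref{cor:self_du} there to obtain the orthogonal splittings of $M_1$ and $N_1$ and the isomorphism $(\Ker F_1)^\perp\cong(\Ker F_1^*)^\perp$, hence $[M_1]-[N_1]=[\Ker F_1]-[\Ker F_1^*]$. Where you genuinely diverge is in handling the non-orthogonality of the Banach decomposition $N=N_0\oplus N_1$. The paper first replaces it by the orthogonal decomposition $N=N_0\oplus N_0^\perp$ (substituting $(1-P)F_1$ for $F_1$, with $P$ the orthogonal projection onto $N_0$, and checking that kernel, image and index are unchanged); after that $(\Im F)^\perp$ literally sits inside the new complement and is the cokernel of the corner map. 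You instead keep the original decomposition, pass to $N=N_1^\perp\oplus N_1$ (legitimate, since $N_1$ is self-dual so its inclusion is adjointable and $N_1$ is orthogonally complemented), and read off $\Ker F^*\cong\Ker F_1^*$ from the block-triangular form, at the price of needing $T^*$ to be bijective. You correctly isolate this as the delicate point, and your primary justification is sound: $T$ is an adjointable bounded bijection (composition of $F_0$ with the bounded bijection $\pi_{N_1^\perp}|_{N_0}$, and adjointable as $\pi_{N_1^\perp}\circ F\circ\iota_{M_0}$), and the standard fact that an adjointable bijection of Hilbert $C^*$-modules has adjointable inverse (e.g.\ because $T^*T$ is positive and bounded below, hence invertible in $\mathcal{L}(M_0)$) gives bijectivity of $T^*$. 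The alternative route you sketch through Corollary \ref{teo:corrected} is unnecessary and its last step (``verify that the resulting inverse restricts'') is the only vague point in the write-up; I would drop it in favor of the standard lemma. In short: each device trades one verification for another --- the paper re-checks invariance of $\Ker$, $\Im$ and the index under the modification of $F$, while you invoke the invertibility lemma for $T$ --- and both are valid.
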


\begin{proof}
For the decomposition from the above definition, we can assume that the sum on the right-hand side of (\ref{summa}) is also orthogonal. Indeed,
since $F_0: {M}_0 \cong {N}_0$ is an adjointable isomorphism, ${N}_0$ has an orthogonal complement:
$N={N}_0  \oplus ({N}_0)^\bot$ (see e.g. \cite[Theorem 2.3.3]{MTBook}, one can write down the corresponding orthogonal projection explicitly: $P=F(F^*F)^{-1}F^*$). Then, for
\[
\widetilde{F}=
\left(\begin{matrix}
		F_0&0\\
		0& (1-P) F_1
	\end{matrix}\right): 
{M}_0 \oplus {M}_1   \to   {N}_0 \oplus ({N}_0)^\bot,
\]
we have $({N}_0)^\bot \cong  {N}_1$, so
$\mathrm{index}(F)=[M_1]-[N_1]=[M_1]-[({N}_0)^\bot]=\mathrm{index}(\widetilde{F})$,
$\Ker \widetilde{F}=\Ker F$, $\Im \widetilde{F}= \Im F$. So, we assume that ${N}={N}_0 \oplus {N}_1$ with $N_0\perp N_1$.

Let
$x=x_0+x_1$, $x_0\in {M}_0$, $x_1\in {M}_1$ and
$F(x)=0$, so $0=F_0(x_0)+F_1(x_1)\in {N}_0\oplus {N}_1$. Thus
$F_0(x_0)=0$, $F_1(x_1)=0$, so $x_0=0$ and $x\in {M}_1$. Thus
$\Ker F=\Ker F_1\subseteq {M}_1$. 

Consider $F_1:M_1 \to N_1$. Then $M_1$ is self-dual, because it is finitely generated projective module.
Since $A$ is monotone complete, all modules have Hilbert dual, in particular, so is $\Ker F_1$.
Therefore Corollary \ref{cor:K_botbot=K} gives $M_1=\Ker F_1 \oplus (\Ker F_1)^\bot$.
Denote $M_2=(\Ker F_1)^\bot$.
Also we have $\Im F = N_0 \oplus F(M_2)$.

	Note that $(\Im F)^{\bot}\subseteq N_1$ is the kernel
	of the adjoint operator: $(\Im F)^{\bot}=\Ker F^*$ (see (\ref{eq:ker_and_coker})).
	Hence $(\Im F)^{\bot}$ is an orthogonal complementable 
	finitely generated projective submodule of $N_1$.
Evidently, $F(M_2)\subseteq ((\Im F)^\bot)^\bot_{N_1}$ and
	we obtain the following form for $F$:
	$$
	\left(\begin{smallmatrix}
		F_0&0&0\\
		0& F_2 & 0\\
		0& 0 & 0
	\end{smallmatrix}\right): M_0  \oplus (M_2 \oplus \Ker F)
\to N_0 \oplus ((\Im F)^\bot)^\bot_{N_1} \oplus (\Im F)^{\bot},
	$$
where $F_2=F_1|_{M_2}$, and all summands, on both sides, are orthogonal.
By Corollary \ref{cor:self_du}, $M_2 \cong ((\Im F)^\bot)^\bot_{N_1}$
and
\begin{eqnarray*}
\mathrm{index}(F)&=&[M_1]-[N_1]\\
&=&
[M_2]+[\Ker F]-[((\Im F)^\bot)^\bot_{N_1}]-[(\Im F)^{\bot}]\\
&=&[\Ker F]-[(\Im F)^\bot]
\end{eqnarray*}
as desired.	
\end{proof}

%\bibliographystyle{Etscpl}
%\bibliographystyle{amsplane}
%\bibliography{hilb_tmp}

\end{document}